\newtheorem{thm}{Theorem}
\newtheorem{lem}{Lemma}
\theoremstyle{definition}
\newtheorem{remark}{Remark}
\newcommand{\D}{\mathbb D}
\newcommand{\N}{\mathbb N}
\newcommand{\Un}{1\hskip -1.25mm 1}
\numberwithin{equation}{section}
\title[Commutants of quasihomogeneous Toeplitz operators]{Commutants of quasihomogeneous Toeplitz operators on the harmonic Bergman space}
\author{Issam Louhichi} 
\address{King Fahd University of Petroleum \& Minerals\\ Department of Mathematics \& Statistics\\
 Dhahran 31261, Saudi Arabia}
\email{issam@kfupm.edu.sa}
\author{Fanilo Randriamahaleo}
\address{Universit\'e de Bordeaux\\
UFR de Math\'ematiques et Informatique\\
351, Cours de la Lib\'eration\\
33405 Talence, France}
\email{Fanilo.Randriamahaleo@math.u-bordeaux1.fr\\frandriamahaleo@gmail.com}
\author{Lova Zakariasy}
\address{High Institute of Technology\\
Industrial Engineering Department\\201 Antsiranana, Madagascar}
\email{lova.zakariasy@ist-antsiranana.mg}
\keywords{Harmonic Bergman space, Toeplitz operator, Mellin transform}
\begin{document}
\begin{abstract}
One of the major goals in the theory of Toeplitz operators on the Bergman space over the unit disk $\mathbb{D}$ in the complex place $\mathbb{C}$ is to completely describe the commutant of a given Toeplitz operator, that is, the set of all Toeplitz operators that commute with it. Here we shall study the commutants of a certain class of quasihomogeneous Toeplitz operators.
\end{abstract}
\thanks{The second author is supported by \textit{Agence Universitaire de la Francophonie} via \textit{Horizons Francophones} Fundamental Sciences Program.}

\maketitle

\section{Introduction}
Let $L^2 = L^2(\D,dA)$ denote the space of all square integrable functions in the open unit disk $\D$  with respect to the normalized Lebesgue area measure $dA=rdrd\theta$, where $(r,\theta)$ are the polar coordinates. The harmonic Bergman space, denoted by $L^2_h$, is the set of all harmonic functions on $\D$ that are in  $L^2$. It is well known that $L^2_h$ is a closed subspace of $L^2$, hence it is a Hilbert space with an orthonormal basis $\{\sqrt{n+1}z^n\}_{n=0}^\infty \cup \{\sqrt{n+1}\overline{z}^n\}$. Let $Q$ be the orthogonal projection of $L^2$ onto $L^2_h$. A Toeplitz operator $T_f$ on $L^2_h$ with symbol $f$ is defined by 
\[T_f(u) = Q(fu), \ \text{ for } u \in L^2_h.\]
Clearly if the symbol $f$ is bounded then $T_f$ is bounded on $L^2_h$. However, dealing only with bounded symbols is restrictive, as an unbounded function might be a symbol of bounded Toeplitz operator. So if $f \in L^1(\D,dA)$, the Toeplitz operator $T_f$ is densely defined  on $L^2_h$ as an integral operator
$$T_f(u) = \int_\D f(w)u(w)R_z(w)dA(w), \ \text{ for } u \in \mathcal{P}(z,\bar{z}),$$
where $R_z(.)$ is the reproducing kernel of $L^2_h$ and $\mathcal{P}(z,\bar{z})$ is the set of polynomials in $z$ and $\bar{z}$. In this paper we consider the symbols in $L^1(\D,dA)$ for which the  Toeplitz operator defined as above can be extended to a bounded operator on $L^2_h$. 

A symbol $f$ is said to be \emph{quasihomogeneous} of degree $p$  if it can be written as $f(re^{i\theta}) = e^{ip\theta}\phi(r)$, where $\phi$ is a radial function on $\mathbb{D}$. In this case, the associated Toeplitz operator $T_f$ is also called quasihomogeneous Toeplitz operator of degree $p$. Analogous quasihomogeneous Toeplitz operators, defined on the analytic Bergman space, have been extensively studies over the last decade. See for example \cite{cr, lr, lr2, lry, lsz, lz1}. Recently in \cite{lz2}, Louhichi and Zakariasy  investigated products of quasihomogeneous Toeplitz operators on the harmonic Bergman space while generalizing the results of Dong and Zhou in \cite{dz}. This paper is a continuation of the work started in \cite{lz2}. Here we plan to study the commutativity of the product of two quasihomogeneous Toeplitz operators whose degrees are of opposite sign. 

\section{Preliminaries}

Our  main tool in studying quasihomogeneous Toeplitz operators is the Mellin transform. Let $f$ be a radial function in $L^1([0,1], rdr)$. The Mellin transform $\widehat{f}$ of $f$ is defined by

	\[ \widehat{f}(z)=\int_{0}^{1} f(r) r^{z-1}\,dr=\mathcal{M}(f)(z). \]

It is well known that, for these functions, the Mellin transform is well defined on the right half-plane $\{z : \Re z\geq 2\}$ and is analytic on $\{z:\Re z > 2\}$. We also define the Mellin convolution of two functions $f$ and $g$ in $L^1([0,1], rdr)$  by
\[ (f \ast_M g) (r) = \int_0^1 f(t) g(\frac{r}{t})\frac{dt}{t}. \]
It is easy to see that the Mellin transform converts the Mellin convolution product into a pointwise product, that is, 
 \[ \widehat{(f \ast_M g)}(z) = \widehat{f}(z)\widehat{g}(z). \]

Hereafter, the Mellin transform $\widehat{f}$ is uniquely determined by its values on any arithmetic sequence of integers, according to the following Lemma \cite[p.102]{rem}.
\begin{lem}
\label{blk1}
 Suppose that $f$ is a bounded analytic function on
$\{ z : \Re z>0\}$ which vanishes at the pairwise distinct points
$z_1, z_2 \cdots$, where
\begin{itemize}
\item[i)] $\inf\{|z_n|\}>0$\\
 and
\item[ii)] $\sum_{n\ge 1}\Re(\frac{1}{z_n})=\infty$.
\end{itemize}
Then $f$ vanishes identically on $\{ z : \Re z>0\}$.
\end{lem}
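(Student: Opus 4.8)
The plan is to transfer the problem from the right half-plane to the unit disk $\D$, where the classical Blaschke condition for bounded analytic functions is available. Let $\varphi(z)=\dfrac{z-1}{z+1}$, a conformal map of the half-plane $\{z:\Re z>0\}$ onto $\D$ with inverse $\varphi^{-1}(w)=\dfrac{1+w}{1-w}$, and put $g=f\circ\varphi^{-1}$. Then $g$ is bounded and analytic on $\D$ and vanishes at the pairwise distinct points $w_n=\varphi(z_n)\in\D$. If we can show that $\sum_{n\ge1}(1-|w_n|)=\infty$, then, because the zero sequence of any bounded analytic $g\not\equiv0$ on $\D$ must satisfy the Blaschke condition $\sum_{n\ge1}(1-|w_n|)<\infty$, we are forced to conclude $g\equiv0$ on $\D$, hence $f\equiv0$ on $\{z:\Re z>0\}$.

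So the heart of the matter is to deduce the divergence of $\sum(1-|w_n|)$ from hypothesis (ii). An elementary computation with the M\"obius map gives
\[
1-|w_n|^2=1-\left|\frac{z_n-1}{z_n+1}\right|^2=\frac{|z_n+1|^2-|z_n-1|^2}{|z_n+1|^2}=\frac{4\,\Re z_n}{|z_n+1|^2},
\]
and since $|w_n|<1$ we get $1-|w_n|\ge\tfrac12(1-|w_n|^2)=\dfrac{2\,\Re z_n}{|z_n+1|^2}$. On the other hand $\Re\!\big(\tfrac1{z_n}\big)=\dfrac{\Re z_n}{|z_n|^2}$, so it is enough to bound $\dfrac{|z_n|^2}{|z_n+1|^2}$ below by a positive constant. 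This is exactly where hypothesis (i) is used: with $\delta=\inf_n|z_n|>0$ one has $|z_n+1|\le|z_n|+1\le|z_n|+\dfrac{|z_n|}{\delta}=\big(1+\tfrac1\delta\big)|z_n|$, hence $\dfrac{|z_n|^2}{|z_n+1|^2}\ge\big(1+\tfrac1\delta\big)^{-2}=:c>0$. Combining these bounds, $1-|w_n|\ge 2c\,\Re(1/z_n)$ for every $n$, and therefore $\sum_{n\ge1}(1-|w_n|)\ge 2c\sum_{n\ge1}\Re(1/z_n)=\infty$, as needed.

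I expect the only genuinely delicate point to be the role of hypothesis (i): without a positive lower bound on $|z_n|$ the ratio $|z_n|^2/|z_n+1|^2$ can approach $0$ (for instance if $z_n\to0$), the last estimate fails, and in fact the conclusion of the lemma can fail as well — so the argument must make transparent where $\inf_n|z_n|>0$ enters. Everything else — conformal invariance of the class of bounded analytic functions, the Blaschke uniqueness theorem on the disk, and the M\"obius identities above — is completely standard, so once the estimate is established the proof closes at once.
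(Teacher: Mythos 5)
Your proof is correct: the Cayley transform $z\mapsto\frac{z-1}{z+1}$, the identity $1-|w_n|^2=\frac{4\Re z_n}{|z_n+1|^2}$, the use of $\inf_n|z_n|>0$ to bound $|z_n|^2/|z_n+1|^2$ from below, and the appeal to the Blaschke condition on $\D$ all check out. The paper itself gives no proof of this lemma---it simply cites Remmert \cite[p.~102]{rem}---and your argument is exactly the standard half-plane-to-disk reduction that underlies that reference, so there is nothing to compare beyond noting that you have supplied the proof the paper omits.
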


\begin{remark}
\label{blk2} Now one can apply this theorem to prove that if $f\in
L^1([0,1],rdr)$ and if there exist $n_0, p\in\mathbb{N}$ such that
$$\widehat{f}(pk+n_0)=0\textrm{ for all } k\in\mathbb{N},$$
then $\widehat{f}(z)=0$ for all $z\in\{z:\Re z>2\}$ and so $f=0$.
\end{remark}

We shall often use \cite[Lemma 2.5~p.1767]{dz} which can be stated as follows.
\begin{lem}\label{lemdz}
Let $p$ be an integer and $\phi$ be a bounded radial function. For each $k \in \N$,
\begin{align*}
		T_{e^{ip\theta}\phi}(z^k) = & \left\{ \begin{array}{ll}
	(2k+2p+2)\widehat{\phi}(2k+p+2)z^{k+p} & \text{ if }  k \geq -p\\
	(-2k-2p+2)\widehat{\phi}(-p+2)\overline{z}^{-k-p} & \text{ if } k < -p, 
\end{array}	\right.\\
		T_{e^{ip\theta}\phi}(\overline{z}^k) = & \left\{ \begin{array}{ll}
	(2k-2p+2)\widehat{\phi}(2k-p+2)\overline{z}^{k-p} & \text{ if }  k \geq p\\
	(2p-2k+2)\widehat{\phi}(p+2)z^{p-k} & \text{ if } k < p. 
\end{array}	\right.
\end{align*}
\end{lem}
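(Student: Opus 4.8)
The plan is to evaluate $T_{e^{ip\theta}\phi}$ directly on the monomial basis, starting from the definition $T_f u = Q(fu)$ with $Q$ the orthogonal projection of $L^2$ onto $L^2_h$. The structural point I would isolate first is that multiplying a monomial by the quasihomogeneous symbol does not mix angular frequencies: $e^{ip\theta}\phi(r)\,z^k = \phi(r)\,r^k\,e^{i(p+k)\theta}$ and $e^{ip\theta}\phi(r)\,\overline{z}^k = \phi(r)\,r^k\,e^{i(p-k)\theta}$ each carry a single frequency $m$ (equal to $p+k$, resp. $p-k$). Since $L^2_h$ is invariant under rotations, $Q$ respects the Fourier decomposition in $\theta$, so $Q(fu)$ again carries only the frequency $m$; and the only elements of $L^2_h$ with pure angular frequency $m$ are the scalar multiples of $z^m$ when $m\geq 0$ and of $\overline{z}^{\,-m}$ when $m\leq 0$ (the two descriptions agreeing when $m=0$). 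Hence $Q(fu)$ is a scalar multiple of one single monomial, and which monomial it is depends only on the sign of $m$. This is exactly the dichotomy in the statement: $k\geq -p$ versus $k<-p$ for $z^k$, and $k\geq p$ versus $k<p$ for $\overline{z}^k$.

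Granting this, the remaining task is to pin down the scalar, which I would do by an inner-product computation. For instance, when $k\geq -p$ and $m=p+k\geq 0$,
\[
T_{e^{ip\theta}\phi}(z^k)=\frac{\bigl\langle\phi(r)\,r^k e^{im\theta},\,z^m\bigr\rangle}{\|z^m\|^2}\,z^m ,
\]
where evaluating the inner product in polar coordinates makes the $\theta$-integral contribute a constant and leaves the radial integral $\int_0^1\phi(r)\,r^{k+m+1}\,dr=\widehat{\phi}(k+m+2)$; combining this with $\|z^m\|^{-2}=m+1$ gives the coefficient $(2m+2)\widehat{\phi}(k+m+2)=(2k+2p+2)\widehat{\phi}(2k+p+2)$. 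The other three cases go through identically: when $k<-p$ the target is $\overline{z}^{\,-p-k}$ and the radial integral collapses to $\int_0^1\phi(r)\,r^{-p+1}\,dr=\widehat{\phi}(-p+2)$, producing $(-2k-2p+2)\widehat{\phi}(-p+2)\overline{z}^{-k-p}$, and the two cases for $\overline{z}^k$ are obtained the same way with $m=p-k$. As a consistency check one verifies that at the boundary indices $k=-p$ and $k=p$ the two expressions on the relevant line coincide, which must happen because $z^0$ and $\overline{z}^0$ are the same constant function.

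The argument is routine, so the difficulty is entirely in the bookkeeping. First, one has to fix conventions so that $\{\sqrt{n+1}z^n\}_{n\geq0}\cup\{\sqrt{n+1}\overline{z}^n\}_{n\geq1}$ really is an orthonormal basis — this is the source of every factor $2n+2$ — and propagate that normalization through each of the four cases without slips. Second, the index ranges on the non-negative powers appearing (equivalently, the ranges in the series for the reproducing kernel $R_z$ of $L^2_h$) have to be handled with care, since an off-by-one error would misplace the $k=\pm p$ boundary; the boundary check mentioned above guards against this. Finally, the identity is first established for $u\in\mathcal{P}(z,\overline z)$, and one invokes boundedness of the symbol $e^{ip\theta}\phi$ to conclude that $T_{e^{ip\theta}\phi}$ is an honest bounded operator on $L^2_h$, completely determined by its action on the monomials.
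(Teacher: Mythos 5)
Your proof is correct: the frequency-separation observation, the projection formula via the orthonormal basis, and the resulting coefficients $(2m+2)\widehat{\phi}(k+m+2)$ all check out in each of the four cases, including the collapse of the radial integral to $\widehat{\phi}(-p+2)$ (resp.\ $\widehat{\phi}(p+2)$) when the angular frequency changes sign. Note that the paper itself offers no proof of this lemma --- it is quoted verbatim from Dong and Zhou \cite[Lemma 2.5]{dz} --- and your direct computation is exactly the standard argument one would expect there, so there is nothing to flag beyond the (correctly handled) normalization of $dA$ and the restriction $n\geq 1$ in the antiholomorphic part of the basis.
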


The next lemma is a direct application of Lemma \ref{lemdz}.  
\begin{lem}\label{lm2}
Let $p$, $s$ be two positive integers and $\phi$, $\psi$ be radial functions. If 
$$T_{e^{ip\theta}\phi}T_{e^{-is\theta}\psi}=T_{e^{-is\theta}\psi}T_{e^{ip\theta}\phi},$$
then the following equalities hold :
\begin{itemize}
	\item For $ k \leq |s-p|$ :
\begin{align}
p \leq s : & (k+p+1)\widehat{\phi}(2k+p+2) = (s-k+1)\widehat{\phi}(2s-2k -p+2)\label{eq6}\\
p > s : & (k+s+1)\widehat{\psi}(2k+s+2) = (p-k+1)\widehat{\psi}(2p-2k-s+2)\label{eq5}
\end{align}
	\item For $\max(0,s-p) \leq k < s$ :
\begin{align}	\label{eq7}
 (k+p+1)\widehat{\phi}(2k+p+2)\widehat{\psi}(2k+2p-s+2) = (s-k+1)\widehat{\phi}(p+2)\widehat{\psi}(s+2)
\end{align}
  \item For $\max(0,p-s) \leq k < p$
\begin{align} \label{eq8}
 (k+s+1)\widehat{\phi}(2k+2s-p+2)\widehat{\psi}(2k+s+2) = (p-k+1)\widehat{\phi}(p+2)\widehat{\psi}(s+2)
\end{align}
 \item For $ k \geq s$ :
\begin{equation}\label{eq9}
 (k+p+1)\widehat{\phi}(2k+p+2)\widehat{\psi}(2k+2p-s+2) = (k-s+1)\widehat{\phi}(2k-2s+p+2)\widehat{\psi}(2k-s+2)
\end{equation}
\item For $k \geq p$ :
\begin{equation}\label{eq10}
(k-p+1)\widehat{\phi}(2k-p+2)\widehat{\psi}(2k-2p+s+2) = (k+s+1)\widehat{\phi}(2k+2s-p+2)\widehat{\psi}(2k+s+2)
\end{equation}
\end{itemize}
\end{lem}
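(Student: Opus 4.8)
The plan is to apply Lemma~\ref{lemdz} twice: first to evaluate each of the two operator products $T_{e^{ip\theta}\phi}T_{e^{-is\theta}\psi}$ and $T_{e^{-is\theta}\psi}T_{e^{ip\theta}\phi}$ on the basis elements $z^k$ and $\overline z^k$, $k\in\N$, and then to compare the coefficients that come out. Because $p>0$, the operator $T_{e^{ip\theta}\phi}$ sends $z^k$ to a scalar multiple of $z^{k+p}$ and sends $\overline z^k$ to a scalar multiple of $\overline z^{k-p}$ (if $k\ge p$) or of $z^{p-k}$ (if $k<p$); because $-s<0$, the operator $T_{e^{-is\theta}\psi}$ sends $\overline z^k$ to a scalar multiple of $\overline z^{k+s}$ and sends $z^k$ to a scalar multiple of $z^{k-s}$ (if $k\ge s$) or of $\overline z^{s-k}$ (if $k<s$). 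A short bookkeeping check shows that, for each fixed $k$, the two orders of composition land in the same monomial --- on the span of $z^k$ this monomial is $z^{k+p-s}$ when $k\ge s-p$ and $\overline z^{\,s-p-k}$ when $k<s-p$, and an analogous statement holds on the span of each $\overline z^k$ --- so the commutation hypothesis is equivalent to the equality of the two resulting coefficients for every $k$.

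What is left is to write those coefficients explicitly from Lemma~\ref{lemdz} and set them equal. The branch points of the formulas in Lemma~\ref{lemdz} are precisely what creates the case list in the statement. Acting on $z^k$ one must separate $k\le|s-p|$, then $\max(0,s-p)\le k<s$, then $k\ge s$ (with a further inner split according to whether $s-k\ge p$ or $s-k<p$ once the first operator has acted), and these three ranges produce \eqref{eq6} (in the subcase $p\le s$), \eqref{eq7}, and \eqref{eq9} respectively. Acting on $\overline z^k$ one separates $k\le|s-p|$, then $\max(0,p-s)\le k<p$, then $k\ge p$, and these produce \eqref{eq5} (in the subcase $p>s$), \eqref{eq8}, and \eqref{eq10} respectively. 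In each case the two coefficients share a common factor: a nonzero integer of the form $2(k+p-s+1)$ or $2(k+s-p+1)$ (nonzero throughout the relevant range of $k$), multiplied, in the small-$k$ ranges, by the Mellin value $\widehat\phi(p+2)$ or $\widehat\psi(s+2)$; cancelling this common factor leaves exactly the asserted identity.

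Since everything becomes an elementary computation once the basis elements are substituted, I expect the only genuine difficulty to be organizational. One has to keep the nested subcases straight --- which of the two alternatives in Lemma~\ref{lemdz} applies at each of the two stages depends on the size of $k$ relative to $p$, $s$ and $|s-p|$ --- and one has to check that the endpoints of adjacent ranges are handled consistently, so that every $k\in\N$ is covered and no value of $k$ yields two incompatible identities. I would carry out the computation on $z^k$ in full and then observe that the computation on $\overline z^k$ is identical in structure, obtained by interchanging the roles played by the two formulas of Lemma~\ref{lemdz}.
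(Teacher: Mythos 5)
Your proposal is correct and is exactly the route the paper intends: the paper offers no written proof beyond the remark that the lemma is ``a direct application of Lemma~\ref{lemdz},'' and applying that lemma twice to each product on the basis vectors $z^k$, $\overline z^k$ and cancelling the common nonzero integer factors (and, in the small-$k$ ranges, the factor $\widehat\phi(p+2)$ or $\widehat\psi(s+2)$) is precisely that computation. The only quibble is organizational: the inner split according to whether $s-k\geq p$ or $s-k<p$ belongs to the range $k<s$ (it is what separates the range of \eqref{eq6} from that of \eqref{eq7}), not to $k\geq s$ where the first branch of Lemma~\ref{lemdz} always applies after $T_{e^{-is\theta}\psi}$ has acted.
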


\section{Results}
In \cite{lz2}, the uniqueness of the commutant is proved when both quasihomogeneous Toeplitz operators are of positive degree. A similar result is obtained by the next theorem, when the degrees of the Toeplitz operators are of opposite sign.  
\begin{thm}
Let $p$, $s$ be two positive integers and $\psi$ be a non-constant radial function. If there exists a non-constant radial function $\phi$ such that $T_{e^{ip\theta}\phi}$ commutes with $T_{e^{-is\theta}\psi}$, then $\phi$ is unique up to a multiplicative constant.
\end{thm}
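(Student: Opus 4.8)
The plan is to leverage the linearity of the commutation condition in $\phi$. Fix the non-constant radial function $\psi$ and let $V$ be the set of all radial functions $g$ for which $T_{e^{ip\theta}g}$ commutes with $T_{e^{-is\theta}\psi}$. By Lemma \ref{lemdz} the commutator $T_{e^{ip\theta}g}T_{e^{-is\theta}\psi}-T_{e^{-is\theta}\psi}T_{e^{ip\theta}g}$ is linear in $g$, and correspondingly each of the identities \eqref{eq6}--\eqref{eq10} of Lemma \ref{lm2} is homogeneous of degree one in $\widehat{g}$, with coefficients built only from $p$, $s$ and $\widehat{\psi}$; hence $V$ is a vector space. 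Thus it is enough to establish the implication
\[
 g\in V,\quad \widehat{g}(p+2)=0\ \Longrightarrow\ g=0 .
\]
Indeed, granting this, if $\phi_1,\phi_2\in V$ are both non-constant then $\widehat{\phi_1}(p+2)\neq 0$ and $\widehat{\phi_2}(p+2)\neq 0$ (otherwise that $\phi_i$ would be $0$, contradicting non-constancy), so the element $\widehat{\phi_1}(p+2)\,\phi_2-\widehat{\phi_2}(p+2)\,\phi_1$ of $V$ has Mellin transform vanishing at $p+2$, is therefore $0$, and $\phi_1,\phi_2$ are proportional.

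So let $g\in V$ with $\widehat{g}(p+2)=0$ and put $a_k=\widehat{g}(2k+p+2)$ for $k\in\N$, so that $a_0=0$. By Remark \ref{blk2} it suffices to exhibit a single residue $r_0$ modulo $s$ for which $a_{r_0+js}=0$ for every $j\in\N$: then $\widehat{g}$ vanishes on the arithmetic progression $\{\,2r_0+p+2+2sj:j\in\N\,\}$, whence $g=0$. The key mechanism is that \eqref{eq9} and \eqref{eq10} are, after reindexing, one and the same relation,
\[
 (k+s+p+1)\,a_{k+s}\,\widehat{\psi}(2k+2p+s+2)=(k+1)\,a_k\,\widehat{\psi}(2k+s+2),\qquad k\in\N ,
\]
which within each residue class modulo $s$ advances the argument of $\widehat{g}$ by $2s$. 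Starting from $a_0=0$ and iterating this recurrence along the class $r_0=0$ would give $a_0=a_s=a_{2s}=\cdots=0$, finishing the proof. To carry this out we also keep the ``base block'' relations \eqref{eq6} (available when $p\le s$, and giving $a_{s-p}=0$ with no $\widehat{\psi}$ involved) and \eqref{eq7}--\eqref{eq8} (which, when $p\ge s$, express every $a_k$ with $0\le k<s$ as a multiple of $a_0$), so that in any case at least one residue class has a vanishing representative from which to start the iteration.

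The main obstacle is that the iteration divides by values of $\widehat{\psi}$ — at the points $2k+2p+s+2$ in the recurrence, and at $2k+2p-s+2$ in the base step — and a priori these may vanish, so the propagation can stall. This is exactly where the hypothesis that $\psi$ is \emph{non-constant} is used: then $\psi\neq 0$, and by Lemma \ref{blk1} (equivalently Remark \ref{blk2}) the Mellin transform $\widehat{\psi}$ cannot vanish on any infinite arithmetic progression of integers. Since, for a fixed $r_0$, the arguments $2(r_0+js)+2p+s+2$ $(j\in\N)$ themselves form a single arithmetic progression of step $2s$, $\widehat{\psi}$ is nonzero at infinitely many of them; the remaining work is the bookkeeping that upgrades this to a full residue class of vanishing $a_k$'s — either showing that a stalling of the iteration would force $\widehat{\psi}$ to vanish along an entire progression (impossible), or, when the iteration along one class stalls, restarting from one of the other indices supplied by \eqref{eq6}--\eqref{eq8}. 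I expect this handling of the possible zeros of $\widehat{\psi}$, together with keeping the cases $p\ge s$ and $p<s$ apart, to be the only genuinely delicate part of the argument; once $\widehat{g}$ is known to vanish on an arithmetic progression, Remark \ref{blk2} yields $g=0$ and hence the theorem.
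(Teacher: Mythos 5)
Your reduction is appealing---the commutation condition is indeed linear in the symbol, so the admissible $g$'s form a vector space $V$, and the theorem would follow from the rigidity statement ``$g\in V$ and $\widehat{g}(p+2)=0$ imply $g=0$''---but the proof of that rigidity statement is exactly where the proposal breaks down, and neither of the repairs you sketch for its ``delicate part'' actually works. Your recurrence $(k+1)\,a_k\,\widehat{\psi}(2k+s+2)=(k+p+s+1)\,a_{k+s}\,\widehat{\psi}(2k+2p+s+2)$ propagates $a_k=0$ to $a_{k+s}=0$ only when $\widehat{\psi}(2k+2p+s+2)\neq 0$. A \emph{single} zero of $\widehat{\psi}$ at one point of the progression kills the chain permanently: after a stall at $k=j_0s$ you know nothing about $a_{(j_0+1)s}$, and the next step of the recurrence needs that value as input, so no later term can be recovered. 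Your first repair (``a stall would force $\widehat{\psi}$ to vanish along an entire progression'') is not correct: one stall costs only one zero of $\widehat{\psi}$, and a nonzero $\psi$ may well have $\widehat{\psi}$ vanishing at infinitely many Blaschke-sparse integers, possibly one in every residue class. Your second repair fares no better: \eqref{eq6}--\eqref{eq8} supply only finitely many potential restart points, all of index $<s$ (and those coming from \eqref{eq7}--\eqref{eq8} are themselves conditional, of the form ``either $a_k=0$ or $\widehat{\psi}(\cdot)=0$''), so each restarted chain is subject to the same stalling. Since Remark \ref{blk2} is only invoked after a full residue class of zeros has been produced, the central step of your argument is left unfilled. (Separately, the rigidity statement you reduce to is strictly stronger than the theorem, and nothing guarantees it holds.)

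The paper avoids this difficulty entirely by never dividing by $\widehat{\psi}$. It writes \eqref{eq9} once for $\phi_1$ and once for $\phi_2$ and cross-multiplies the two identities so that the $\widehat{\psi}$ factors cancel, yielding $\widehat{\phi_1}(2k+p+2)\widehat{\phi_2}(2k-2s+p+2)=\widehat{\phi_1}(2k-2s+p+2)\widehat{\phi_2}(2k+p+2)$ for every $k$ in the set $Z$ where $\widehat{\psi}(2k+2p-s+2)\widehat{\psi}(2k-s+2)\neq0$. The point is that Lemma \ref{blk1} does not require a full arithmetic progression, only a vanishing set with $\sum\Re(1/z_n)=\infty$; since $\widehat{\psi}$ is a nonzero bounded analytic function its zero set satisfies the Blaschke condition, so $\sum_{k\in Z}1/k=\infty$, the identity extends to the whole half-plane, and \cite[Lemma 6]{l} gives $\phi_1=c\,\phi_2$. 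In short, the two-function cross-multiplication is robust to zeros of $\widehat{\psi}$ because it only needs the good set to be Blaschke-divergent, whereas your one-function chain needs \emph{consecutive} good steps, which a single bad point destroys.
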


\begin{proof}
Assume there exist two non-constant radial functions $\phi_1$ and $\phi_2$ such that both $T_{e^{ip\theta}\phi_1}$ and $T_{e^{ip\theta}\phi_2}$ commute  with $T_{e^{-is\theta}\psi}$. Then Equation (\ref{eq9}) implies that for all $k \geq s$ :
\begin{equation}\label{eq11a}
(k+p+1)\widehat{\phi_1}(2k+p+2)\widehat{\psi}(2k+2p-s+2) = (k-s+1)\widehat{\phi_1}(2k-2s+p+2)\widehat{\psi}(2k-s+2),
\end{equation}
and
\begin{equation}\label{eq11b}
(k+p+1)\widehat{\phi_2}(2k+p+2)\widehat{\psi}(2k+2p-s+2)=(k-s+1)\widehat{\phi_2}(2k-2s+p+2)\widehat{\psi}(2k-s+2).
\end{equation}
From these equalities, we obtain
\begin{equation*}
\widehat{\phi_1}(2k+p+2)\widehat{\phi_2}(2k-2s+p+2) = \widehat{\phi_1}(2k-2s+p+2)\widehat{\phi_2}(2k+p+2)
\end{equation*}
for all $k$ in the set $Z=\{ k\geq s: \widehat{\psi}(2k+2p-s+2)\widehat{\psi}(2k-s+2)\neq 0\}$. Clearly, $\displaystyle{\sum_{k\in Z}\frac{1}{k}=\infty}$ because $\psi$ is not identically null. Now using Lemma \ref{blk2}, we complexify the equation above and we obtain
$$\widehat{\phi_1}(z+2s)\widehat{\phi_2}(z) = \widehat{\phi_1}(z)\widehat{\phi_2}(z+2s) \textrm{ for }\Re z > 0.$$
Therefore \cite[ Lemma 6~p.1468]{l} implies that $\phi_1 = c \phi_2$ for some constant $c$.
\end{proof}

In \cite{lz2}, it is established that nontrivial quasihomogeneous Toeplitz operators with quasihomogeneous degrees of opposite signs do not commute on the analytic Bergman space of the unit disk. Here we shall demonstrate that it is not the case for the analogous Toeplitz operators defined on $L^2_h$.

\begin{thm}\label{th3}
 Let $p$, $s$, $m$ and $n$ be four  integers such that $p \geq s>0$, $m\geq 0$ and $n=(2m+1)s$. If there exists a nonzero radial function $\phi$ such that $T_{e^{ip\theta}\phi}$ commutes with $T_{e^{-is\theta}r^n}$, then $p=s=1$ and $\displaystyle{\phi(r)=\sum_{j=0}^{m}c_j r^{2j-1}}$.
\end{thm}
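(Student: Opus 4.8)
The plan is to feed $\psi(r)=r^n$ into Lemma \ref{lm2} and exploit that $\widehat{r^n}(z)=\frac{1}{z+n}$ never vanishes. After this substitution, equation (\ref{eq9}) becomes the two-term recursion
\[
(k+p+1)(k+ms+1)\,\widehat{\phi}(2k+p+2)=(k-s+1)(k+p+ms+1)\,\widehat{\phi}(2k-2s+p+2),\qquad k\ge s,
\]
while equations (\ref{eq7}) and (\ref{eq8}) turn into explicit identities expressing $\widehat{\phi}(2k+p+2)$ (for $0\le k<s$), respectively $\widehat{\phi}(2k+2s-p+2)$ (for $\max(0,p-s)\le k<p$), as rational multiples of the single value $\widehat{\phi}(p+2)$. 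A preliminary remark is that $\widehat{\phi}(p+2)\neq 0$: if it vanished, then (\ref{eq7}) would force $\widehat{\phi}$ to vanish at $p+2,p+4,\dots,2s+p$, the recursion would propagate these zeros along the entire progression $\{p+2+2k:k\ge 0\}$, and Remark \ref{blk2} would give $\widehat{\phi}\equiv 0$, i.e. $\phi=0$, against the hypothesis.

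The heart of the argument is to deduce $p=s=1$ from the two ``initial'' identities. Evaluating (\ref{eq8}) at its left endpoint $k=\max(0,p-s)$ makes every Mellin transform collapse to $\widehat{\phi}(p+2)$ and to $\widehat{\psi}$ at two explicit points; clearing denominators and cancelling $\widehat{\phi}(p+2)\neq 0$ leaves, after a short computation, $(p-s)\,(p+s+ms+2)=0$, and since the second factor is positive we must have $p=s$. Now put $p=s$: then both (\ref{eq7}) and (\ref{eq8}) furnish formulas for $\widehat{\phi}(2k+p+2)$ over the common range $0\le k<p$. Equating them and using $p+mp=(m+1)p$, the identity reduces to $\big(k+(m+1)p+1\big)^{2}=\big((m+1)p+1\big)^{2}$; both sides being positive, this forces $k=0$. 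Hence the range $\{1,\dots,p-1\}$ must be empty, so $p=1$, and therefore $p=s=1$.

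It remains to identify $\phi$. With $p=s=1$ and $n=2m+1$ the recursion reads $(k+2)(k+m+1)\widehat{\phi}(2k+3)=k(k+m+2)\widehat{\phi}(2k+1)$ for $k\ge 1$; since $\widehat{\phi}(3)\neq 0$ and none of the coefficients vanish, it determines $\widehat{\phi}$ on all odd integers $\ge 3$, and a telescoping computation yields
\[
\widehat{\phi}(2K+1)=\frac{2(K+m+1)}{(m+2)\,K(K+1)}\,\widehat{\phi}(3),\qquad K\ge 1 .
\]
This is exactly the restriction to $\{3,5,7,\dots\}$ of the rational function $z\mapsto \frac{C(z+2m+1)}{(z-1)(z+1)}$ with $C=\frac{4\widehat{\phi}(3)}{m+2}$; since $\widehat{\phi}(z)$ minus this rational function is bounded and analytic on $\{\Re z>2\}$ and vanishes along an arithmetic progression, Remark \ref{blk2} gives $\widehat{\phi}(z)=\frac{C(z+2m+1)}{(z-1)(z+1)}$ for $\Re z>2$. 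A partial-fraction decomposition, combined with $\widehat{r^{\alpha}}(z)=\frac{1}{z+\alpha}$ and the injectivity of the Mellin transform, then shows that $\phi$ is a linear combination of $r^{-1},r,r^{3},\dots,r^{2m-1}$, i.e. $\phi(r)=\sum_{j=0}^{m}c_{j}r^{2j-1}$ (indeed only the $j\le 1$ terms survive, consistently with the uniqueness up to a scalar obtained in the preceding theorem), as claimed.

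The step I expect to be the main obstacle is the middle one: selecting the right equations and the right values of $k$ so that all the $\widehat{\phi}$- and $\widehat{\psi}$-values collapse to a single scalar relation, and making the reduction $\widehat{\phi}(p+2)\neq 0$ watertight in every case --- in particular when $m=0$, where $n=s$ and several of the identities degenerate, but the crucial one coming from (\ref{eq8}) still yields $(p-s)(p+s+2)=0$ and hence $p=s$.
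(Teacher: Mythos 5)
Your route to $p=s=1$ --- which you correctly identify as the crux --- does not work, and this is where your argument genuinely breaks down. Two specific failures. (i) Evaluating \eqref{eq8} at $k=p-s$ and cancelling $\widehat{\phi}(p+2)\neq 0$ gives $(p+1)(s+n+2)=(s+1)(2p-s+n+2)$, whose two sides differ by $(p-s)(n-s)=2ms\,(p-s)$, not by $(p-s)(p+s+ms+2)$. So the relation is $2ms(p-s)=0$: it does yield $p=s$ when $m\geq 1$, but it is an identity carrying no information when $m=0$ --- precisely the degenerate case you try to dispose of with the erroneous factor $(p+s+2)$. (ii) Once $p=s$, equations \eqref{eq7} and \eqref{eq8} are literally the same equation: both read $(k+p+1)\widehat{\phi}(2k+p+2)\widehat{\psi}(2k+p+2)=(p-k+1)\widehat{\phi}(p+2)\widehat{\psi}(p+2)$ on $0\leq k<p$. ``Equating them'' is therefore a tautology and cannot produce $\bigl(k+(m+1)p+1\bigr)^2=\bigl((m+1)p+1\bigr)^2$, let alone $p=1$. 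In fact no manipulation of the initial identities alone can give $p=1$: for every $p=s$ the recursion \eqref{eq9} together with the initial conditions admits the consistent formal solution $\widehat{r^p\phi}(z)=CF(z)$ with $F(z)=\Gamma(\frac{z}{2s})\Gamma(\frac{z}{2s}+\frac{p}{s}+m+1)/\bigl(\Gamma(\frac{z}{2s}+m+1)\Gamma(\frac{z}{2s}+\frac{p}{s}+1)\bigr)$.

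The missing idea is the one the paper actually uses: solve the recursion first, obtain $\phi(r)=\sum_{j=0}^{m}c_jr^{2js-p}$ with $c_0\neq 0$ (the residue of $F$ at $z=0$ does not vanish when $\phi\not\equiv 0$), and then observe that the leading term $r^{-p}$ belongs to $L^1(\mathbb{D},dA)$ --- i.e., $e^{ip\theta}\phi$ is an admissible symbol at all --- if and only if $p=1$; the hypothesis $p\geq s>0$ then forces $s=1$. Your preliminary remark that $\widehat{\phi}(p+2)\neq 0$ and your terminal telescoping for $p=s=1$ (leading to $\widehat{\phi}(z)=C(z+2m+1)/((z-1)(z+1))$, hence $\phi$ a combination of $r^{-1}$ and $r$ only) are correct and consistent with the paper's answer, but they sit on top of a reduction to $p=s=1$ that you have not established.
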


\begin{proof}
If $T_{e^{ip\theta}\phi}$ commutes with $T_{e^{-is\theta}r^n}$, then
\begin{equation}\label{eq16}
T_{e^{ip\theta}\phi}T_{e^{-is\theta}r^n}(z^k)=T_{e^{-is\theta}r^n}T_{e^{ip\theta}\phi}(z^k), \textrm{ for all }k\geq 0,
\end{equation}
and
\begin{equation}\label{eq17}
 T_{e^{ip\theta}\phi}T_{e^{-is\theta}r^n}(\bar{z}^k)=T_{e^{-is\theta}r^n}T_{e^{ip\theta}\phi}(\bar{z}^k), \textrm{ for all }k\geq 0.
\end{equation}
By Lemma \ref{lemdz} and since $\widehat{r^n}(z)=\dfrac{1}{z+n}$, equation \eqref{eq16} implies that
\begin{eqnarray}
\frac{2k-2s+2}{2k-s+n+2}\widehat{\phi}(2k-2s+p+2)&=&\frac{2k+2p+2}{2k+2p-s+n+2}\widehat{\phi}(2k+p+2),\textrm{ if }k\geq s, \label{eq18}\\
\displaystyle{\frac{2s-2k+2}{s+n+2}}\widehat{\phi}(p+2)&=&\frac{2k+2p+2}{2k+2p-s+n+2}\widehat{\phi}(2k+p+2), \textrm{ if } k<s. \label{eq19}
\end{eqnarray}
On the other hand, equation \eqref{eq17} implies that
\begin{eqnarray}
&\dfrac{2k+2s+2}{2k+s+n+2}\widehat{\phi}(2k+2s-p+2)&=\frac{2k-2p+2}{2k-2p+s+n+2}\widehat{\phi}(2k-p+2),\textrm{ if } k\geq p,\label{eq20}\\
&&\nonumber\\
&\dfrac{2k+2s+2}{2k+s+n+2}\widehat{\phi}(2k+2s-p+2)&=\frac{2p-2k+2}{s+n+2}\widehat{\phi}(p+2),\textrm{ if } p-s\leq k<p,\\
&&\nonumber\\
&\dfrac{2k+2s+2}{2k+s+n+2}&=\dfrac{2p-2k+2}{2p-2k-s+n+2}, \textrm{ if } 0\leq k<p-s. \label{eq22}
\end{eqnarray}
It is easy to see that equation \eqref{eq18} can be obtained from equation \eqref{eq20} by substituting $k$ by $k+p-s$. So we shall proceed using equation \eqref{eq20} to determine the form of the radial symbol $\phi$. By setting $z=2k-2p+2$, we complexify equation \eqref{eq20} and we obtain
$$\frac{z+2p+2s}{z+2p+s+n}\widehat{\phi}(z+p+2s)=\frac{z}{z+s+n}\widehat{\phi}(z+p), \textrm{ for }\Re z>0.$$
Here, we notice that the function defined by
$$f(z)= \frac{z+2p+2s}{z+2p+s+n}\widehat{\phi}(z+p+2s)-\frac{z}{z+s+n}\widehat{\phi}(z+p)$$
is analytic and bounded in the right-half plane and vanishes at $z=2k-2p+2$ for any $k\geq p$. Hence, by Lemma \ref{blk1}, we have $f(z)\equiv 0$. Therefore, we obtain that in the right half-plane
\begin{equation}\label{eq23}
\frac{\widehat{r^p\phi}(z+2s)}{\widehat{r^p\phi}(z)}=\frac{z(z+2p+s+n)}{(z+s+n)(z+2p+2s)}, \textrm{ for } \Re z>0.
\end{equation}
Since $n=(2m+1)s$ and using the well-known identity $\Gamma(z+1)=z\Gamma(z)$, where $\Gamma$ is the Gamma function, equation \eqref{eq23} can be written as
\begin{equation}\label{eq21}
\frac{\widehat{r^p\phi}(z+2s)}{\widehat{r^p\phi}(z)}=\frac{F(z+2s)}{F(z)}\textrm{ for } \Re z>0,
\end{equation}
where $\displaystyle{F(z)=\frac{\Gamma(\frac{z}{2s})\Gamma(\frac{z}{2s}+\frac{p}{s}+m+1)}{\Gamma(\frac{z}{2s}+m+1)\Gamma(\frac{z}{2s}+\frac{p}{s}+1)}}$. Next, equation \eqref{eq21}, combined with \cite[Lemma 6~p. 1468]{l}, implies there exists a constant $C$ such that
\begin{equation}\label{eq25}
\widehat{r^p\phi}(z)=CF(z), \textrm{ for }\Re z>0.
\end{equation}
Now, we shall show that $F(z)$ is the Mellin transform of a bounded function. Using the well-known property of the Gamma function namely
$$\Gamma(z+n)=(z+n-1)(z+n-2)\ldots z\Gamma(z)\textrm{ for }n\in\mathbb{N},$$ 
and after simplification, we obtain that
$$F(z)=\frac{(\frac{z}{2s}+\frac{p}{s}+m-1)(\frac{z}{2s}+\frac{p}{s}+m-2)\ldots(\frac{z}{2s}+\frac{p}{s}+1)}{(\frac{z}{2s}+m)(\frac{z}{2s}+m-1)\ldots\frac{z}{2s}},$$
which is a proper fraction in $z$ and can be written as sum of partial fractions
$$F(z)=\sum_{j=0}^{m}\frac{a_j}{z+2js}=\sum_{j=0}^{m}a_j\widehat{r^{2js}}(z).$$
Hence, equation \eqref{eq25} and Remark \ref{blk2} imply that
$$\phi(r)=\sum_{j=0}^{m}c_jr^{2js-p}.$$
At this point, we observe that the first term $r^{-p}$ in the expression of $\phi(r)$ is in $L^1(\mathbb{D},dA)$, and therefore $T_{e^{ip\theta}\phi}$ is a bounded Toeplitz operator, if and only if $p=1$.  Since by hypothesis we assumed $p\geq s>0$, we must have $p=s=1$. Finally, it is easy to verify that with $p=s=1$ equation \eqref{eq22} is satisfied, and also that the function $\displaystyle\phi(r)=\sum_{j=0}^{m}a_jr^{2j-1}$ satisfies equations \eqref{eq19} and \eqref{eq21}. 
\end{proof}

\begin{remark}\label{analytic}
It is well known that on the analytic Bergman space of the unit disk, non-trivial anti-analytic Toeplitz operators (resp. analytic Toeplitz operators) commute only with other such operators. Theorem 2 tells us that it is not the case in $L^2_h$. In fact if we take $m=0$, we see that $T_{\bar{z}}$ commutes with $T_{\phi}$ where $\displaystyle{\phi(z)=\frac{1}{\bar{z}}}$. Moreover, the symbol $\phi$ is obviously not bounded, but it is the so-called "nearly bounded symbol" \cite[p~204]{az}. Hence $T_{\phi}$ is bounded.
\end{remark}
The next theorem is a slight generalization of the previous one.
\begin{thm}
Let $p$ be a positive integer and $\phi$, $\psi$ be nonzero radial functions. If $T_{e^{ip\theta}\phi}$ commutes with $T_{e^{-ip\theta}\psi}$, then $p$ is equal to 1 and $\phi$, $\psi$ satisfy the following Mellin convolution equation
$$ \phi\ast_M\psi=C\left(\frac{1}{r}-r\right), $$
where $C$ is a constant.
\end{thm}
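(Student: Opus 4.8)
The plan is to run the argument of Theorem~\ref{th3}, but carried out for the Mellin convolution $g:=\phi\ast_M\psi$ rather than for a single radial symbol. First I would observe that, since $\phi,\psi\in L^1([0,1],rdr)$, a short estimate shows $g\in L^1([0,1],rdr)$, so $\widehat g=\widehat\phi\,\widehat\psi$ is analytic and bounded on $\{\Re z>2\}$; and $\widehat g\not\equiv0$ because neither $\widehat\phi$ nor $\widehat\psi$ vanishes identically ($\phi,\psi$ being nonzero, by Remark~\ref{blk2}) and the zeros of an analytic function on the connected set $\{\Re z>2\}$ are isolated. Taking $s=p$ in Lemma~\ref{lm2}, the essential relation is \eqref{eq9} (equivalently \eqref{eq10}), which becomes
\[(k+p+1)\,\widehat g(2k+p+2)=(k-p+1)\,\widehat g(2k-p+2),\qquad k\ge p.\]

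Next I would complexify this. Dividing by $2k+2p+2$ and setting $z=2k-p+2$ turns the identity into $\widehat g(z+2p)=\dfrac{z-p}{z+3p}\,\widehat g(z)$ along the arithmetic progression $z=p+2,p+4,\dots$, and the function $f(z)=\widehat g(z+2p)-\dfrac{z-p}{z+3p}\,\widehat g(z)$ is analytic and bounded in the right half-plane and vanishes on that progression, so Lemma~\ref{blk1} gives $f\equiv0$. To read off $\widehat g$ from this functional equation, I would pass to $G:=r^{p}g=r^{p}(\phi\ast_M\psi)\in L^1([0,1],rdr)$, whose Mellin transform $\widehat G(z)=\widehat g(z+p)$ then satisfies
\[\widehat G(z+2p)=\frac{z}{z+4p}\,\widehat G(z)=\frac{\widetilde F(z+2p)}{\widetilde F(z)}\,\widehat G(z),\qquad \widetilde F(z):=\frac1{z(z+2p)},\]
where $\widetilde F$ has all of its poles in $\{\Re z\le0\}$, exactly as does the function $F$ in the proof of Theorem~\ref{th3}. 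Hence \cite[Lemma~6 p.1468]{l} supplies a constant $C$ with $\widehat G(z)=\dfrac{C}{z(z+2p)}=\dfrac{C}{2p}\bigl(\widehat{1}(z)-\widehat{r^{2p}}(z)\bigr)$ on $\{\Re z>2\}$. Since $\frac{C}{2p}(1-r^{2p})\in L^1([0,1],rdr)$, Remark~\ref{blk2} yields $r^{p}(\phi\ast_M\psi)=\frac{C}{2p}(1-r^{2p})$, that is,
\[\phi\ast_M\psi=\frac{C}{2p}\bigl(r^{-p}-r^{p}\bigr).\]

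Finally I would force $p=1$. The left-hand side lies in $L^1([0,1],rdr)$, whereas $r^{-p}\notin L^1([0,1],rdr)$ as soon as $p\ge2$; and $C=0$ would give $\widehat\phi\,\widehat\psi\equiv0$, hence $\phi=0$ or $\psi=0$, against the hypothesis. Therefore $p=1$, and the displayed identity becomes $\phi\ast_M\psi=\dfrac{C}{2}\bigl(\dfrac1r-r\bigr)$, which is the asserted equation after renaming the constant. The step I expect to demand the most care is the complexification together with the appeals to Lemma~\ref{blk1} and \cite[Lemma~6 p.1468]{l}: since $\psi$ is an arbitrary radial function rather than a monomial as in Theorem~\ref{th3}, one no longer obtains for free the bounded coefficients coming from $\widehat{r^{n}}$, so one must massage the relation — first dividing by $2k+2p+2$, then replacing $g$ by $G=r^{p}g$ — both to make the coefficient bounded and to arrive at an auxiliary function all of whose poles lie in the left half-plane.
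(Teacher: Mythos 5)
Your proposal is correct and follows essentially the same route as the paper: take $s=p$ in Lemma~\ref{lm2}, complexify the resulting recurrence via Lemma~\ref{blk1}, pass to the Mellin transform of $r^p(\phi\ast_M\psi)$, identify it with $C/\bigl(z(z+2p)\bigr)$ using \cite[Lemma 6]{l}, invert by Remark~\ref{blk2}, and force $p=1$ by integrability of $r^{-p}$. Your explicit observation that $C\neq 0$ (since $\phi,\psi$ are nonzero) is a small but welcome tightening of the final integrability step.
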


\begin{proof}
If $T_{e^{ip\theta}\phi}$ commutes with $T_{e^{-ip\theta}\psi}$, then the equalities in Lemma \ref{lm2} (with $p = s$) are reduced to two main equations:
\begin{itemize}
\item for all $0 \leq k < p$, 
\begin{equation}\label{eq12}
(k+p+1)\widehat{\phi}(2k+p+2)\widehat{\psi}(2k+p+2) = (p-k+1)\widehat{\phi}(p+2)\widehat{\psi}(p+2);
\end{equation}
\item for all $ k \geq p$,
\begin{equation}\label{eq13}
(k-p+1)\widehat{\phi}(2k-p+2)\widehat{\psi}(2k-p+2)= (k+p+1)\widehat{\phi}(2k+p+2)\widehat{\psi}(2k+p+2).
\end{equation}
\end{itemize}
We complexify  Equation $\eqref{eq13}$ by letting $z = 2k-2p+2$  and we obtain
\begin{equation}\label{eq14}
z\widehat{\phi}(z+p)\widehat{\psi}(z+p) = (z+4p)\widehat{\phi}(z+3p)\widehat{\psi}(z+3p), \quad \text{ for } \Re(z) > 0.
\end{equation}
Using the multiplicative property of the Mellin convolution, Equation \eqref{eq14} is equivalent to
\begin{equation}\label{M}
\frac{\mathcal{M}\left(r^p\phi \ast_M r^p\psi\right)(z+2p)}{\mathcal{M}\left(r^p\phi\ast_M r^p\psi\right)(z)} = \frac{F(z+2p)}{F(z)},
\end{equation}
where $F$ the function defined by
\begin{equation*}
F(z)=\frac{\Gamma\left(\frac{z}{2p}\right)}{\Gamma\left(\frac{z}{2p}+2\right)}, 
\end{equation*}
and $\Gamma$ is the Gamma function. Using the well-known identity $\Gamma(z+1)=z\Gamma(z)$, we can write
\begin{equation*}
F(z)=\frac{1}{z(z+2p)}= \frac{1}{z}-\frac{1}{z+2p} = \widehat{(\Un-r^{2p})}(z), 
\end{equation*}
where $\Un$ denote the constant function with value one. Now,  \cite[Lemma 6~p.1468]{l} combined with Equation \ref{M} implies the existence of a constant $C$ such that  
$$\mathcal{M}(r^p\phi\ast_M r^p\psi)(z)=C F(z)\textrm{  for } \Re z>0.$$ 
Therefore
$$\phi \ast_M \psi = C(\frac{1}{r^p}-r^p).$$
Finally, it is easy to see that this Mellin convolution (as a function) is in $L^1([0,1], rdr)$  if and only if $p=1$, and that in this case Equation \eqref{eq12} is satisfies.
\end{proof}

\begin{remark}
As an example one can show that, if $\alpha \geq -1$ is an integer and if  
$T_{e^{-i\theta}r^\alpha}$ commutes with $T_{e^{i\theta}\phi}$, then  
\begin{equation*}
\phi(r) = \frac{1}{2}\left(\frac{\alpha+1}{r}-(\alpha-1)r \right).
\end{equation*}
In particular, if $\alpha=1$ we obtain the example of  Remark \ref{analytic}.
\end{remark}

Now we shall consider the case when $p < s$.
\begin{thm}\label{th4}
Let $p, s, m,$ and $n$ be  integers such that $0<p < s$, $m \geq 0$ and $n=(2m+1)s$. Assume there exists a radial function $\phi$ such  that $T_{e^{ip\theta}\phi}$ commutes with $T_{e^{-is\theta}r^n}$. Then $p = 1$ and 
\begin{itemize}
 \item $\displaystyle \phi(r)=\sum_{j=0}^{m}c_jr^{2j-1}$ if $s\leq m+1$,
 \item $\phi\equiv 0$ if $s> m+1$.
 \end{itemize}
\end{thm}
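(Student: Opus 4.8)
The plan is to follow the template of the proof of Theorem \ref{th3}, the genuinely new work being confined to the ``boundary'' relations of Lemma \ref{lm2}. First I would write out the commutation relation $T_{e^{ip\theta}\phi}T_{e^{-is\theta}r^n}=T_{e^{-is\theta}r^n}T_{e^{ip\theta}\phi}$ on the basis $\{z^k\}\cup\{\overline z^k\}$ by means of Lemma \ref{lemdz} and $\widehat{r^n}(z)=\tfrac1{z+n}$; this is precisely the system of Lemma \ref{lm2} with $\psi=r^n$. For $k\ge s$ this system still contains the same ``bulk'' identity as in Theorem \ref{th3}, and complexifying it --- set $z=2k-2s+2$ and invoke Lemma \ref{blk1} --- gives, for $\Re z>0$,
$$\frac{\widehat{r^p\phi}(z+2s)}{\widehat{r^p\phi}(z)}=\frac{z(z+2p+s+n)}{(z+s+n)(z+2p+2s)}.$$
Because $n=(2m+1)s$, the right-hand side equals $F(z+2s)/F(z)$ for $F(z)=\dfrac{\Gamma(z/2s)\,\Gamma(z/2s+p/s+m+1)}{\Gamma(z/2s+m+1)\,\Gamma(z/2s+p/s+1)}$, and then \cite[Lemma 6~p.1468]{l} forces $\widehat{r^p\phi}(z)=CF(z)$ for a constant $C$.

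Next, using $\Gamma(z+1)=z\Gamma(z)$, I would rewrite $F$ as a proper rational function and expand it in partial fractions, $F(z)=\sum_{j=0}^m\frac{a_j}{z+2js}$. Since $0<p/s<1$, none of the residues $a_j$ vanishes; in particular $a_0\ne0$. By Remark \ref{blk2} this leaves only two possibilities: either $\phi\equiv0$, or $\phi(r)=C\sum_{j=0}^m a_jr^{2js-p}$ with $C\ne0$. In the second case the term $Ca_0r^{-p}$ lies in $L^1(\mathbb D,dA)$ --- a prerequisite for $T_{e^{ip\theta}\phi}$ to be densely defined --- only when $p<2$, and since $p\ge1$ this yields $p=1$.

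It then remains to decide which of the functions $\phi(r)=C\sum_j a_jr^{2js-1}$ also satisfy the ``small-$k$'' part of Lemma \ref{lm2}. Here the assumption $p<s$ matters in a new way: unlike the case $p=s=1$ of Theorem \ref{th3}, equation \eqref{eq6} is now non-trivial --- it holds for $0\le k\le s-1$ and couples $\widehat\phi(2k+3)$ with $\widehat\phi(2s-2k+1)$ --- and similarly for \eqref{eq7} and \eqref{eq8}. Substituting the explicit transform $\widehat\phi(w)=C\sum_j\frac{a_j}{w+2js-1}$ turns these into a finite set of numerical identities in $s$ and $m$ alone, and the whole point is to show that this set is compatible with $C\ne0$ exactly when $s\le m+1$ --- in which case it pins $\phi$ down to the expression in the statement --- and is otherwise overdetermined, forcing $C=0$, hence $\phi\equiv0$. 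Sufficiency, i.e.\ that the displayed $\phi$ really does commute with $T_{e^{-is\theta}r^n}$ (it satisfies all of \eqref{eq6}--\eqref{eq10}), is then a routine direct verification.

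The hard part will be this last step --- extracting the threshold $s\le m+1$ from the boundary relations \eqref{eq6}--\eqref{eq8}. This needs a careful accounting of the residues $a_j$ (equivalently, of the explicit shape of $F$) against the finitely many constraints they must meet, plus the closing sufficiency check; everything preceding it is a direct transcription of the proof of Theorem \ref{th3}.
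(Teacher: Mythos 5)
Your first half---complexifying the bulk relation for $k\ge p$, obtaining $\widehat{r^p\phi}=CF$ via \cite[Lemma 6]{l}, expanding $F$ in partial fractions with all residues $a_j\neq0$ because $0<p/s<1$, and extracting $p=1$ from the integrability of the $r^{-p}$ term---is exactly what the paper does; it compresses all of this into ``by a similar argument as in the proof of Theorem \ref{th3}.'' The gap is in the endgame, which is the only genuinely new content of this theorem. Having (correctly) observed that the bulk analysis pins $\phi$ down to a one-parameter family $C\sum_j a_jr^{2js-1}$ with \emph{fixed} residues, you reduce the boundary relations to a single yes/no consistency check on the fixed vector $(a_0,\dots,a_m)$, and then simply assert that this check succeeds exactly when $s\le m+1$, flagging it yourself as ``the hard part.'' Nothing in your setup produces that threshold: a consistency condition on one fixed vector has no visible reason to flip at $s=m+1$, and the dichotomy in the statement is precisely what remains to be proved. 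The paper closes this step by an entirely different mechanism: it treats the coefficients $c_j$ as $m+1$ \emph{free} unknowns, writes the $s$ relations $(k+2)\widehat{\phi}(2k+3)=(s-k+1)\widehat{\phi}(2s-2k+1)$, $0\le k\le s-1$, as a homogeneous system $Ac=0$ with $A$ of size $s\times(m+1)$, and concludes by comparing the number of equations with the number of unknowns---which is where $s\lessgtr m+1$ actually enters.

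You should also be aware that the two routes do not splice together painlessly, so the deferred step cannot be dismissed as routine. If, as your first half (and the paper's) shows, the $c_j$ are forced to be proportional to the fixed residues $a_j$, then they are not free unknowns and a dimension count on $A$ says nothing about whether the specific vector $(a_j)$ lies in $\ker A$; conversely, your one-parameter reduction offers no mechanism for a threshold in $s$ at all. Moreover the rows of $A$ are not in general position: the relation for $k$ and for $s-1-k$ is the same equation up to sign, so the rank of $A$ is at most $\lceil s/2\rceil$, which already undercuts any quick ``$s$ independent rows'' argument. In short, the step you postponed is exactly where the proof lives; as written, your proposal identifies the reduction but does not prove the theorem, and the verification you call routine is the part that requires a careful, explicit computation with the residues $a_j$ against the relations \eqref{eq6}--\eqref{eq8}.
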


\begin{proof}
Assume that $T_{e^{ip\theta}\phi}$ commutes with $T_{e^{-is\theta}r^n}$. Then, by a similar argument as in the proof of Theorem \ref{th3}, we show that $p=1$  and $\displaystyle \phi(r)=\sum_{j=0}^{m}c_jr^{2js-1}$. Moreover, Lemma \ref{lm2} implies that   
\begin{equation*}
(k+2)\widehat{\phi}(2k+3)=(s-k+1)\widehat{\phi}(2s-2k+1) \text{ for all } 0 \leq k \leq s-1,
\end{equation*}
which is equivalent to
\begin{equation}\label{eq28}
(k+2)\sum_{j=0}^{m}\frac{c_j}{2k+2js+2}=(s-k+1)\sum_{j=0}^{m}\frac{c_j}{2s-2k+2js}, \textrm{ for all } 0\leq k\leq s-1.
\end{equation}
Now, the $s$ equalities given by Equations \eqref{eq28} can be written as a homogeneous linear system in the following way
\begin{equation}\label{S}
A\left(\begin{array}{c}c_0\\  \vdots\\ c_m \end{array}\right)=\left(\begin{array}{c} 0\\ \vdots\\ 0 \end{array}\right),
\end{equation}
where the matrix $A$ is of size $s\times (m+1)$ and its entries are given by
$$a_{kj}=\frac{k+2}{2k+2js+2}-\frac{s-k+1}{2s-2k+2js},$$
for $0\leq k\leq s-1$ and $0\leq j\leq m$. Clearly the $s$ rows of the matrix $A$ are linearly independent. Thus
\begin{itemize}
 \item [$\bullet$] if $s\leq m+1$, the homogeneous system (\ref{S}) has less equations than unknowns and therefore it has a nontrivial solution i.e., the $c_j$ are not all equal to zero.
 \item [$\bullet$] if $s> m+1$, (\ref{S})  has more independent equations than unknowns and hence $c_j = 0$ for all $0\leq j\leq m$  i.e., $\phi\equiv 0$.
\end{itemize}
\end{proof}

\noindent
\textbf{Acknowledgments.}The second author is grateful to Professor Michel Rajoelina and Dr Elizabeth Strouse, his PhD advisor, for their useful suggestions.

{\small
}

\end{document}